\newcommand{\eg}{{\sfcode`\.1000 e.g.}}
\theoremstyle{plain}
\newtheorem*{theorem*}{Theorem}
\newtheorem{theorem}{Theorem}
\newtheorem{corollary}[theorem]{Corollary}
\theoremstyle{definition}
\newtheorem{example}[theorem]{Example}
\newtheorem{question}[theorem]{Question}
\newtheorem*{acknowledgment}{Acknowledgment}
\theoremstyle{remark}
\newtheorem{remark}[theorem]{Remark}
\let\scr=\mathcal
\def\1{\mathbf 1}
\def\ph{\mathord-}
\def\L{\mathrm L}
\def\R{\mathrm R}
\let\from=\leftarrow
\let\into=\hookrightarrow
\let\tens=\otimes
\def\id{\mathrm{id}}
\def\op{\mathrm{op}}
\def\Pr{\mathcal{P}\mathrm{r}}
\def\Cat{\mathcal{C}\mathrm{at}{}}
\def\Shv{\mathrm{Shv}}
\def\Fun{\mathrm{Fun}}
\let\lim=\relax
\DeclareMathOperator*{\lim}{lim}
\DeclareMathOperator*{\colim}{colim}
\DeclareMathOperator{\Exc}{Exc}
\def\fin{\mathrm{fin}}
\def\Sp{\mathrm{Sp}}
\title{Topoi of parametrized objects}
\author{Marc Hoyois}
\date{\today}
\address{Department of Mathematics, Massachusetts Institute of Technology, Cambridge, MA, USA}
\email{hoyois@mit.edu}
\urladdr{\url{http://math.mit.edu/~hoyois/}}
\begin{document}

\begin{abstract}
	We give necessary and sufficient conditions on a presentable $\infty$-category $\scr C$ so that families of objects of $\scr C$ form an $\infty$-topos.
	In particular, we prove a conjecture of Joyal that this is the case whenever $\scr C$ is stable.
\end{abstract}

\maketitle

Let $\scr X$ be an $\infty$-topos and let $\scr C$ be a sheaf of $\infty$-categories on $\scr X$. 
We denote by
\[
\int_{\scr X}\scr C \to \scr X
\]
the cartesian fibration classified by $\scr C$. An object in $\int_\scr X\scr C$ is thus a pair $(U,c)$ with $U\in\scr X$ and $c\in\scr C(U)$.
The question we are interested in is the following:

\begin{question}\label{q:main}
	When is $\int_\scr X\scr C$ an $\infty$-topos?
\end{question}

If $\scr C$ is a presentable $\infty$-category, we will also denote by $\int_\scr X\scr C\to\scr X$ the cartesian fibration classified by the sheaf
\[
U\mapsto \Shv_\scr C(\scr X_{/U})\simeq \scr C\tens\scr X_{/U}.
\]
Joyal calls $\scr C$ an \emph{$\infty$-locus} if $\int_\scr S\scr C$ is an $\infty$-topos,\footnote{Joyal also requires an $\infty$-locus to be pointed, but it will be convenient to omit this condition.} and he conjectures that any presentable stable $\infty$-category is an $\infty$-locus \cite{Joyal}.
The motivating example, due to Biedermann and Rezk, is the $\infty$-category $\Sp$ of spectra: there is an equivalence
\[
\int_\scr X\Sp\simeq \Exc(\scr S^\fin_*,\scr X),
\]
and the right-hand side is an $\infty$-topos \cite[Remark 6.1.1.11]{HA}. More generally, for any small $\infty$-category $\scr A$ with finite colimits and a final object,
\[
\int_\scr X\Exc_*^n(\scr A,\scr S)\simeq \Exc^n(\scr A,\scr X)
\]
is an $\infty$-topos. 

This paper gives a partial answer to Question~\ref{q:main} in Theorem~\ref{thm:main}. As a corollary, we obtain a characterization of $\infty$-loci (see Corollary~\ref{cor:locus}), similar to Rezk's characterization of $\infty$-topoi,
which easily implies Joyal's conjecture (see Example~\ref{ex:stable}). The latter also follows more directly from two observations, applicable to any $\Pr^\R$-valued sheaf $\scr C$ on an $\infty$-topos $\scr X$:
\begin{itemize}
	\item If $L\colon\scr C\to\scr C$ is objectwise an accessible left exact localization functor, then the inclusion $\int_\scr XL\scr C\subset\int_\scr X\scr C$ is accessible and has a left exact left adjoint.
	 This is clear once we know that $\int_\scr X\scr C$ is presentable \cite[Theorem 1.3]{GHN}.
	\item If 
	$\scr E$ is a small $\infty$-category, there is a pullback square in $\Pr^{\L,\R}$:
	\begin{tikzmath}
		\def\colsep{2em}
		\diagram{\int_\scr X\Fun(\scr E,\scr C) & \Fun(\scr E,\int_{\scr X}{\scr C}) \\ \scr X & \Fun(\scr E,\scr X)\rlap. \\};
		\arrows (11-) edge (-12) (11) edge (21) (21-) edge (-22) (12) edge (22);
	\end{tikzmath}
\end{itemize}
These observations show that the class of $\infty$-loci is closed under accessible left exact localizations and under the formation of functor $\infty$-categories $\Fun(\scr E,\ph)$.
Since any presentable stable $\infty$-category is a left exact localization of $\Fun(\scr E,\Sp)$ for some $\scr E$ \cite[Proposition 1.4.4.9]{HA}, Joyal's conjecture holds.

The next theorem gives more intrinsic conditions on $\scr C$ implying that $\int_\scr X\scr C$ is an $\infty$-topos, and it leads to a second proof of Joyal's conjecture. Recall that a colimit in an $\infty$-category $\scr C$ with pullbacks is \emph{van Kampen} if it is preserved by the functor $\scr C\to\Cat_\infty^\op$, $c\mapsto\scr C_{/c}$, and recall that an $\infty$-category $\scr X$ is an $\infty$-topos iff it is presentable and all small colimits in $\scr X$ are van Kampen.

\begin{theorem}\label{thm:main}
	Let $\scr X$ be an $\infty$-topos and $\scr C$ a sheaf of $\infty$-categories on $\scr X$. Suppose that:
	\begin{enumerate}
		\item for every $U\in\scr X$:
		\begin{enumerate}
			\item $\scr C(U)$ is accessible and admits pushouts;
			\item pushouts in $\scr C(U)$ are van Kampen;\footnote{The existence of pullbacks in $\scr C(U)$ follows from the other assumptions, as the proof will show.}
		\end{enumerate}
		\item for every $f\colon V\to U$ in $\scr X$:
		\begin{enumerate}
			\item $f^*\colon\scr C(U)\to\scr C(V)$ preserves pushouts;
			\item $f^*$ has a left adjoint $f_!$;
			\item for every $c,0\in\scr C(V)$ with $0$ initial, the square
			\begin{tikzmath}
				\diagram{\scr C(U)_{/f_!(c)} & \scr C(U)_{/f_!(0)} \\ \scr C(V)_{/c} & \scr C(V)_{/0} \\};
				\arrows (11-) edge 
				(-12) (11) edge node[left]{$\eta^*\circ f^*$} (21) (21-) edge 
				(-22) (12) edge node[right]{$\eta^*\circ f^*$} (22);
			\end{tikzmath}
			is cartesian, where $\eta\colon \id\to f^*f_!$ is the unit of the adjunction;
		\end{enumerate}
		\item for every cartesian square
		\begin{tikzmath}
			\diagram{V' & V \\ U' & U \\};
			\arrows (11-) edge node[above]{$g$} (-12) (11) edge node[left]{$q$} (21) (21-) edge node[below]{$f$} (-22) (12) edge node[right]{$p$} (22);
		\end{tikzmath}
		in $\scr X$, the canonical transformation $q_!g^*\to f^*p_!\colon \scr C(V)\to\scr C(U')$ is an equivalence.
	\end{enumerate}
	Then $\scr C$ is a $\Pr^{\L,\R}$-valued sheaf and $\int_\scr X\scr C$ is an $\infty$-topos.
\end{theorem}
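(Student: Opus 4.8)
The plan is to show that $\int_\scr X\scr C$ is a presentable $\infty$-category whose colimits are van Kampen, and then to invoke the recognition theorem identifying $\infty$-topoi with exactly such $\infty$-categories. Since every small colimit is built from pushouts and small coproducts, and the class of van Kampen colimits is closed under these operations, it will suffice to verify the van Kampen condition for pushouts and small coproducts.

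First I would establish that $\scr C$ is $\Pr^{\L,\R}$-valued. The crucial point is that every fiber $\scr C(U)$ is cocomplete: for a small set $J$ consider the fold map $\nabla\colon\coprod_J U\to U$ in $\scr X$; since $\scr C$ is a sheaf it sends this coproduct to a product, so $\nabla^*\colon\scr C(U)\to\scr C(\coprod_JU)\simeq\prod_J\scr C(U)$ is the diagonal, and its left adjoint $\nabla_!$ from (2b) computes the $J$-indexed coproduct in $\scr C(U)$. The empty case $J=\emptyset$ gives the initial object $(\emptyset\to U)_!(\ast)$, using $\scr C(\emptyset)\simeq\ast$. Together with the pushouts of (1a) this yields all small colimits, so $\scr C(U)$ is cocomplete, hence presentable by accessibility. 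The transition functor $f^*$ preserves pushouts by (2a) and, by the Beck--Chevalley equivalence (3) applied to the cartesian square relating $\coprod_JV\to\coprod_JU$ to $f\colon V\to U$, it preserves coproducts; thus $f^*$ is cocontinuous and admits a right adjoint $f_*$, while (2b) provides the left adjoint $f_!$. Hence $\scr C$ is $\Pr^{\L,\R}$-valued, and $\int_\scr X\scr C$ is presentable by \cite[Theorem 1.3]{GHN}.

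Next I would analyze slices and colimits in $\int_\scr X\scr C$. Because each $f^*$ has a left adjoint, the fibration $\int_\scr X\scr C\to\scr X$ is bicartesian, and the colimit of a diagram $i\mapsto(U_i,c_i)$ is $(\colim_iU_i,\ \colim_i(g_i)_!c_i)$, where $g_i\colon U_i\to\colim_jU_j$. Moreover the slice admits the description $(\int_\scr X\scr C)_{/(U,c)}\simeq\int_{\scr X_{/U}}\scr C_{/c}$, where $\scr C_{/c}$ is the sheaf on $\scr X_{/U}$ carrying $f\colon V\to U$ to $\scr C(V)_{/f^*c}$. The van Kampen condition to be proved is that $(U,c)\mapsto(\int_\scr X\scr C)_{/(U,c)}$ sends colimits to limits, and I would assemble it from three inputs: descent in the base $\scr X$, which is an $\infty$-topos, so $U\mapsto\scr X_{/U}$ sends colimits to limits; the fiberwise van Kampen condition (1b), which gives $\scr C(V)_{/f^*c}\simeq\lim_i\scr C(V)_{/f^*(g_i)_!c_i}$ after applying the cocontinuous $f^*$ to the colimit $c=\colim_i(g_i)_!c_i$; and the base-change hypotheses (2c) and (3), which let one rewrite each $\scr C(V)_{/f^*(g_i)_!c_i}$ in terms of a slice over $c_i$ living over $V_i:=V\times_UU_i$, so that the pieces reassemble into the limit over $i$.

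The main obstacle is precisely this reassembly across the bicartesian fibration. The colimit is formed covariantly, by pushing the $c_i$ forward along $(g_i)_!$, whereas descent is tested contravariantly, via the slices $\scr C(V)_{/f^*(-)}$; reconciling the two variances is exactly what (2c) and (3) accomplish. Concretely, (3) moves $f^*(g_i)_!$ past the base change into $(g_i')_!f_i^*$ for the induced maps in the square $V_i=V\times_UU_i$, and (2c) then identifies the resulting slice over a $(g_i')_!$-pushforward with the corresponding slice downstairs relative to the initial object. The bulk of the work is thus checking that the cartesian squares these hypotheses supply glue coherently to the base descent of $\scr X$ and the fiberwise descent (1b). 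A final point requiring care is justifying the reduction to pushouts and coproducts, so that verifying the van Kampen condition for these two classes of colimits genuinely suffices to apply the recognition theorem.
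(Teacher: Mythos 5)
Your global strategy coincides with the paper's: show $\int_\scr X\scr C$ is presentable with van Kampen colimits, reduce the van Kampen condition to pushouts and coproducts, and invoke the descent characterization of $\infty$-topoi. Your first half is correct, and in fact takes a slightly different (clean) route: you build fiberwise coproducts as $\nabla_!$ along the fold map $\nabla\colon\coprod_J U\to U$ using the sheaf property and (2b), and you deduce that $f^*$ preserves coproducts from the Beck--Chevalley condition (3) applied to the square comparing $\coprod_J V\to\coprod_J U$ with $f$; the paper instead constructs colimits in $\int_\scr X\scr C$ first and extracts fiberwise weakly contractible colimits via \cite[Lemma 5.4.5.5]{HTT}. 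Also, the reduction to pushouts and coproducts that you flag as a ``final point requiring care'' is indeed legitimate: it follows from \cite[Propositions 4.4.2.6 and 4.4.2.7]{HTT} applied to the functor $(U,c)\mapsto(\int_\scr X\scr C)_{/(U,c)}$ valued in $\Cat_\infty^\op$.

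The genuine gap is in the second half: the claim that pushouts in $\int_\scr X\scr C$ are van Kampen is never actually proved. You correctly identify the ingredients ((1b), (2c), (3), descent in $\scr X$) and even the precise difficulty (colimits are formed covariantly via $(g_i)_!$ while descent is tested contravariantly via $f^*$), but you then defer exactly this reconciliation as ``the bulk of the work'' --- and that reconciliation is the mathematical content of the theorem. The obstruction is concrete: for a general span $(U,c)\from(W,e)\to(V,d)$, neither hypothesis applies directly, since (1b) is purely fiberwise and (2c) concerns a single cocartesian edge measured against an initial object. The device your outline is missing is the paper's use of the factorization system induced by the cocartesian fibration $\int_\scr X\scr C\to\scr X$ (every map factors as a cocartesian edge followed by a vertical one). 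After reducing, via (2a) and (3), to checking the comparison functor on the fiber over the terminal object of $\scr X_{/U\amalg_WV}$, this factorization lets one decompose an arbitrary pushout square, by pasting, into squares of just two elementary types: (i) a vertical span inside a single fiber $\scr C(U)$, where (1b) applies verbatim, and (ii) a span $(U,f_!e)\from(W,e)\to(V,d)$ whose left leg is cocartesian, which is handled by a cube argument --- the lateral faces are cartesian by (2c), the back face (over initial objects) is cartesian because $\scr C$ is a sheaf whose restriction functors preserve initial objects, hence the front face is cartesian. Without this decomposition, or some substitute for it, the cartesian squares supplied by (2c) and the fiberwise descent supplied by (1b) cannot be made to ``glue'' in the way your sketch requires, so the proposal stops short of a proof.
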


\begin{proof}
	It is clear that the coproduct of a family $(U_\alpha,c_\alpha)$ in $\int_\scr X\scr C$ is given by $(\coprod_\alpha U_\alpha,c)$, where $c=(c_\alpha)_\alpha\in\prod_\alpha\scr C(U_\alpha)\simeq \scr C(\coprod_\alpha U_\alpha)$. Given a span $(U,c)\from (W,e)\to (V,d)$, (1a) and (2b) imply that it has a pushout given by $(U\amalg_WV,u_!(c)\coprod_{w_!(e)}v_!(d))$, where $u$, $v$, and $w$ are the canonical maps to $U\amalg_WV$. Hence, $\int_\scr X\scr C$ has small colimits, and they are preserved by the projection $\int_\scr X\scr C\to\scr X$.
	By \cite[Lemma 5.4.5.5]{HTT}, this implies that $\scr C(U)$ has weakly contractible colimits, being the pullback $\{U\}\times_\scr X\int_\scr X\scr C$. By (2b), $\scr C(U)$ also has an initial object, namely $i_!(0)$ where $i\colon\emptyset\to U$ and $0$ is the unique object of $\scr C(\emptyset)\simeq *$. Hence, by (1a), $\scr C(U)$ is presentable. By (2b), we deduce that $\int_\scr X\scr C$ has pullbacks that are computed in a similar manner to pushouts.
	Note that $f^*\colon\scr C(U)\to\scr C(V)$ preserves the initial object by condition (3).
	To show that $f^*$ preserves weakly contractible colimits, it suffices to show that pullback along $(V,*)\to (U,*)$ in $\int_{\scr X}\scr C$ does, since the canonical functors $\scr C(U)\to\int_\scr X\scr C$ are conservative and preserve weakly contractible colimits.
	Once this is done, we will know that $\scr C$ is $\Pr^{\L,\R}$-valued and that $\int_\scr X\scr C$ is presentable, by \cite[Theorem 1.3]{GHN}. 
	
	It thus remains to show that colimits in $\int_\scr X\scr C$ are van Kampen. 
	The statement for coproducts is straightforward, so we only consider pushouts. We will use the factorization system on $\int_\scr X\scr C$ induced by the cocartesian fibration $\int_\scr X\scr C\to \scr X$: any map $(V,d)\to (U,c)$ 
	factors uniquely as $(V,d)\to (U,f_!(d))\to (U,c)$ where the first map is cocartesian and the second one is vertical.

	We must show that the functor
	\[\int_\scr X\scr C\to \Cat_\infty^\op,\quad (U,c)\mapsto\bigg(\int_\scr X\scr C\rlap{\bigg)}_{\;\;/(U,c)}\simeq \int_{\scr X_{/U}}\!\scr C_{/c},\] 
	preserves pushouts.
	Consider a span $(U,c)\from (W,e)\to (V,d)$ in $\int_\scr X\scr C$. Since the canonical map
	\[
	\int_{\scr X_{/U\amalg_WV}}\scr C_{/u_!c\amalg_{w_!e}v_!d}\longrightarrow\int_{\scr X_{/U}}\!\scr C_{/c}\times_{\int_{\scr X_{/W}}\!\scr C_{/e}}\int_{\scr X_{/V}}\!\scr C_{/d}
	\]
	is a map of cartesian fibrations over $\scr X_{/U\amalg_WV}\simeq \scr X_{/U}\times_{\scr X_{/W}}\scr X_{/V}$, it suffices to show that it is a fiberwise equivalence. By (2a) and (3), it suffices to consider the fiber over $U\amalg_WV$, which is
	\begin{equation}\label{eqn:vanKampen}
	\tag{\textasteriskcentered}
	\scr C(U\amalg_WV)_{/u_!c\amalg_{w_!e}v_!d} \longrightarrow \scr C(U)_{/c}\times_{\scr C(W)_{/e}}\scr C(V)_{/d}.
	\end{equation}
	Decomposing a given span in $\int_\scr X\scr C$ using the cocartesian factorization system, we see that it suffices to consider two types of spans:
	\begin{enumerate}
		\item[(i)] a vertical span $(U,c)\from (U,e)\to (U,d)$ in $\scr C(U)$;
		\item[(ii)] a span of the form $(U,f_!e)\stackrel f\from (W,e)\to (V,d)$.
	\end{enumerate}
	In case (i), the map~\eqref{eqn:vanKampen} is an equivalence by (1b). 
	In case (ii), we consider the following cube, where $P=U\amalg_WV$ and $v\colon V\to P$ is the canonical map:
	\begin{tikzmath}[
		node distance=2.5cm,
	  cross line/.style={preaction={draw=white, -,line width=6pt}}]
	  \node (A) {$\scr C(P)_{/v_!d}$};
	  \node [right of=A] (B) {$\scr C(U)_{/f_!e}$};
	  \node [below of=A] (C) {$\scr C(V)_{/d}$};
	  \node [right of=C] (D) {$\scr C(W)_{/e}$};
	  \node (A1) [right of=A, above of=A, node distance=1cm] {$\scr C(P)_{/0}$};
	  \node [right of=A1] (B1) {$\scr C(U)_{/0}$};
	  \node [below of=A1] (C1) {$\scr C(V)_{/0}$};
	  \node [right of=C1] (D1) {$\scr C(W)_{/0}\rlap.$};
 	 
	 \draw[->] (A) -- (C);
	 \draw[->] (C) -- (D);
	  \draw[->] (A) -- (A1);
	  \draw[->] (B) -- (B1);
	  \draw[->] (C) -- (C1);
	  \draw[->] (D) -- (D1);
	  \draw[->] (A1) -- (B1);
	  \draw[->] (A1) -- (C1);
	  \draw[->] (B1) -- (D1);
	  \draw[->] (C1) -- (D1);
	  \draw[->,cross line] (A) -- (B); \draw[->,cross line] (B) -- (D);
	\end{tikzmath}
	The lateral faces are cartesian by (2c). The back face is cartesian since $\scr C$ is a sheaf whose restriction maps preserve initial objects. Hence, the front face is cartesian, as desired.
\end{proof}

\begin{remark}\label{rmk:necessary}
	The conditions of Theorem~\ref{thm:main} are almost necessary. 
	Suppose that $\scr C(*)$ has initial and final objects that restrict to initial and final objects of $\scr C(U)$ for every $U\in\scr X$. 
	Under this mild assumption, if $\int_\scr X\scr C$ is an $\infty$-topos, then conditions (1) and (2a) hold with ``pushouts'' replaced by ``weakly contractible colimits'', and condition (2c) and (3) hold provided (2b) does. Thus, (2b) is the only condition that may not be necessary for $\int_\scr X\scr C$ to be an $\infty$-topos. However, if $\scr C$ is the sheaf associated with a presentable $\infty$-category, it is clear that (1a), (2a), (2b), and (3) always hold; in that case, therefore, $\int_\scr X\scr C$ is an $\infty$-topos iff (1b) and (2c) hold.
\end{remark}

\begin{example}\label{ex:sheaf}
	Let $\scr C$ be a sheaf on $\scr X$ satisfying the assumptions of Theorem~\ref{thm:main}. Then the following sheaves of $\infty$-categories on $\scr X$ also satisfy the assumptions of Theorem~\ref{thm:main}:
	\begin{enumerate}
		\item the subsheaf $L\scr C\subset\scr C$ for any $L\colon\scr C\to\scr C$ that is objectwise an accessible left exact localization functor;
		\item the sheaf $\Fun(\scr E,\scr C)$ for any small $\infty$-category $\scr E$;
		\item 
		the sheaf $\Exc^n(\scr A,\scr C)$ for any $n\geq 0$ and any small $\infty$-category $\scr A$ with finite colimits and a final object (this follows from the previous two cases, noting that $\scr C$ is objectwise differentiable);
		\item the sheaves $\scr C_{/c}$ and $\scr C_{c/}$ for any $c\in\scr C(*)$.
	\end{enumerate}
\end{example}

Specializing to sheaves of the form $U\mapsto\Shv_{\scr C}(\scr X_{/U})$, we obtain the following characterization of $\infty$-loci:

\begin{corollary}\label{cor:locus}
	Let $\scr C$ be a presentable $\infty$-category. The following assertions are equivalent:
	\begin{enumerate}
		\item $\scr C$ is an $\infty$-locus. 
		\item For every $\infty$-topos $\scr X$, $\int_\scr X\scr C$ is an $\infty$-topos.
		\item Weakly contractible colimits in $\scr C$ are van Kampen.
		\item Pushouts in $\scr C$ are van Kampen, and for every $\infty$-groupoid $A$ and every functor $F\colon A^{\triangleleft}\to\scr C$ sending the initial vertex to the initial object of $\scr C$, the colimit of $F$ is van Kampen.
	\end{enumerate}
\end{corollary}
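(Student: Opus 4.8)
The plan is to run the cycle $(1)\Leftrightarrow(2)$ together with $(2)\Rightarrow(3)\Rightarrow(4)\Rightarrow(1)$, keeping the genuinely computational input confined to the last implication. The equivalence $(1)\Leftrightarrow(2)$ is the formal part: $(2)\Rightarrow(1)$ is the special case $\scr X=\scr S$, while for $(1)\Rightarrow(2)$ I would feed the identification $\int_\scr X\scr C\simeq\scr X\tens\int_\scr S\scr C$ recorded above into the fact that the $\Pr^\L$-tensor product of two $\infty$-topoi is again an $\infty$-topos (indeed $\scr X\tens\int_\scr S\scr C\simeq\Shv_{\int_\scr S\scr C}(\scr X)$). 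Thus once $\int_\scr S\scr C$ is an $\infty$-topos, so is every $\int_\scr X\scr C$.

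For $(2)\Rightarrow(3)$ I would exploit that $\int_\scr S\scr C$ is an $\infty$-topos, hence satisfies descent: all of its colimits are van Kampen. Identify $\scr C$ with the fiber $\scr C(\ast)=\{\ast\}\times_\scr S\int_\scr S\scr C$. As in the proof of Theorem~\ref{thm:main} (via \cite[Lemma 5.4.5.5]{HTT}), the fiber inclusion preserves weakly contractible colimits, and the image in $\scr S$ of a weakly contractible diagram constant at $\ast$ has colimit $\ast$, so such a colimit of objects of the fiber stays in the fiber over $\ast$. It then remains to descend the van Kampen equivalence to the fiber: the slice $(\int_\scr S\scr C)_{/(\ast,c)}$ is again of the form $\int_\scr S\scr C_{/c}$ over $\scr S$, with fiber $\scr C_{/c}$ over $\ast$, so taking fibers over $\ast$ of the equivalence $(\int_\scr S\scr C)_{/X}\simeq\lim_k(\int_\scr S\scr C)_{/X_k}$—an operation that commutes with the limit—yields $\scr C_{/\colim_k c_k}\simeq\lim_k\scr C_{/c_k}$. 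The implication $(3)\Rightarrow(4)$ is then immediate, since a pushout diagram and any cone $A^{\triangleleft}$ each have an initial object and are therefore weakly contractible, so the van Kampen assertions in~(4) are special cases of~(3).

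The substance is $(4)\Rightarrow(1)$, which I would obtain by applying Theorem~\ref{thm:main} to the sheaf $U\mapsto\Shv_\scr C(\scr S_{/U})$ on $\scr S$. Since $U$ is an $\infty$-groupoid, $\scr S_{/U}\simeq\Fun(U,\scr S)$ and hence $\scr C(U)\simeq\Fun(U,\scr C)$, with $f^*$ given by precomposition and $f_!$ by left Kan extension. By Remark~\ref{rmk:necessary} conditions (1a), (2a), (2b) and (3) of the theorem hold automatically, so it suffices to verify (1b) and (2c) for this sheaf. For (1b): pushouts, pullbacks and the slices $\Fun(U,\scr C)_{/G}\simeq\lim_{u\in U}\scr C_{/G(u)}$ are controlled pointwise over $U$, so van Kampenness of pushouts in $\scr C$ propagates to $\Fun(U,\scr C)$, giving (1b) from~(4a). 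For (2c) I would first reduce to $U=\ast$ by passing to the homotopy fibers of $f$, which are $\infty$-groupoids; writing $f\colon A\to\ast$, one has $f_!\simeq\colim_A$ and $f_!(0)\simeq\emptyset$ since $f_!$ preserves initial objects.

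The crux is to recognize the cartesian square (2c) as the van Kampen condition of~(4b). Unwinding, (2c) asserts
\[
\scr C_{/\colim_A c}\simeq\scr C_{/\emptyset}\times_{\Fun(A,\scr C)_{/0}}\Fun(A,\scr C)_{/c},
\]
and since $\Fun(A,\scr C)_{/c}\simeq\lim_{a\in A}\scr C_{/c(a)}$ and $\Fun(A,\scr C)_{/0}\simeq\lim_{a\in A}\scr C_{/\emptyset}$, the right-hand side is precisely the cone limit $\lim_{(A^{\triangleleft})^{\op}}\scr C_{/F(\ph)}$ computing van Kampenness of the colimit of $F\colon A^{\triangleleft}\to\scr C$ with $F$ sending the cone point to $\emptyset$. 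Thus (2c) for all $f$ is exactly~(4b), and Theorem~\ref{thm:main} then delivers~(1). I expect this last identification—matching the fiber term $\scr C_{/f_!(0)}=\scr C_{/\emptyset}$ with the cone-point contribution of the pointed diagram, with no strict-initiality hypothesis (which is what makes the formulation usable in the stable case, where $\scr C_{/\emptyset}\not\simeq\ast$)—to be the main obstacle; the remaining steps are either formal or pointwise bookkeeping.
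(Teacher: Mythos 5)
Your proposal is correct and follows essentially the same route as the paper: the equivalence $(1)\Leftrightarrow(2)$ via $\int_\scr X\scr C\simeq\scr X\tens\int_\scr S\scr C$, the trivial implication $(3)\Rightarrow(4)$, and the key step $(4)\Rightarrow(1)$ by applying Theorem~\ref{thm:main} to $U\mapsto\Fun(U,\scr C)$, reducing condition (2c) fiberwise over $u\in U$ and identifying the resulting cartesian squares with the van Kampen condition of (4b). The only cosmetic difference is that you prove $(2)\Rightarrow(3)$ by hand (descent in $\int_\scr S\scr C$ plus passage to fibers over $\ast$), where the paper instead invokes Remark~\ref{rmk:necessary}, whose content is exactly that argument.
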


\begin{proof}
	The equivalence of $(1)$ and $(2)$ follows from
	\[\int_\scr X\scr C\simeq \scr X\otimes\int_\scr S\scr C.\]
	 Note that the functor $\scr C\to \int_{\scr S}\scr C$, $c\mapsto (*,c)$, is fully faithful and preserves limits and weakly contractible colimits. The implication $(1)\Rightarrow (3)$ follows immediately, and $(3)\Rightarrow(4)$ is obvious. To prove $(4)\Rightarrow(1)$, we apply Theorem~\ref{thm:main}. The only nontrivial condition to check is (2c): given a morphism $f\colon V\to U$ in $\scr S$ and a functor $F\colon V\to\scr C$, we must show that the square
	\begin{tikzmath}
		\def\colsep{1.5em}
		\diagram{\Fun(U,\scr C)_{/f_!F} & \Fun(U,\scr C_{/0}) \\ \Fun(V,\scr C)_{/F} & \Fun(V,\scr C_{/0}) \\};
		\arrows (11-) edge (-12) (11) edge (21) (21-) edge (-22) (12) edge (22);
	\end{tikzmath}
	is cartesian. But this square is the limit over $u\in U$ of the squares
	\begin{tikzmath}
		\def\colsep{.8em}
		\diagram{\scr C_{/\colim_{v\in f^{-1}(u)} F(v)} & \scr C_{/0} \\ \lim_{v\in f^{-1}(u)}\scr C_{/F(v)} & \lim_{v\in f^{-1}(u)}\scr C_{/0}\rlap, \\};
		\arrows (11-) edge (-12) (11) edge (21) (21-) edge (-22) (12) edge (22);
	\end{tikzmath}
	which are cartesian by assumption.
\end{proof}

\begin{example}
	An $\infty$-category is an $\infty$-topos iff it is an $\infty$-locus with a strictly initial object.
\end{example}

\begin{example}\label{ex:stable}
	If $\scr C$ is a stable $\infty$-category, any weakly contractible colimit in $\scr C$ is van Kampen. This follows easily from the fact that cartesian squares in $\scr C$ are preserved by colimits.
	 In particular, we obtain another proof that any presentable stable $\infty$-category is an $\infty$-locus.
\end{example}

\begin{example}
	Let $\scr C'\to\scr C$ be a conservative functor between presentable $\infty$-categories that preserves pullbacks and weakly contractible colimits.
	If $\scr C$ is an $\infty$-locus, so is $\scr C'$.
	For example:
	\begin{enumerate}
		\item If $\scr Y$ is an $\infty$-topos, the $\infty$-category $\scr Y_*^{\geq \infty}$ of pointed $\infty$-connective objects of $\scr Y$ is an $\infty$-locus.
		\item If $\scr C$ is an $\infty$-locus and $\scr A$ is a small $\infty$-category with finite colimits and a final object, the $\infty$-category $\Exc^n_*(\scr A,\scr C)$ of reduced $n$-excisive functors from $\scr A$ to $\scr C$ is an $\infty$-locus (use Example~\ref{ex:sheaf} (3)).
		\item If $\scr C$ is an $\infty$-locus and $T\colon\scr C\to\scr C$ is an accessible pullback-preserving comonad, the $\infty$-category $\mathrm{coAlg}_T(\scr C)$ of $T$-coalgebras is an $\infty$-locus. Similarly, if $T\colon\scr C\to\scr C$ is a monad that preserves weakly contractible colimits, then the $\infty$-category $\mathrm{Alg}_T(\scr C)$ of $T$-algebras is an $\infty$-locus.
	\end{enumerate}
\end{example}

We conclude this note by considering an amusing family of $\infty$-topoi.
Let $\scr C$ be an $\infty$-locus in which every truncated object is contractible, \eg, a presentable stable $\infty$-category. The projection $\int_\scr X\scr C\to\scr X$ admits left and right adjoints given by $U\mapsto(U,\emptyset)$ and $U\mapsto (U,*)$, so that the functor
\[
\scr X\into\int_\scr X\scr C,\quad U\mapsto (U,*),
\]
is an essential geometric embedding. Moreover, it is clear that an object $(U,c)\in\int_\scr X\scr C$ is hypercomplete if and only if $U$ is hypercomplete and $c\simeq *$; in other words, $\scr X$ is a cotopological localization of $\int_\scr X\scr C$ \cite[Definition 6.5.2.17]{HTT}.
In particular, $\int_\scr S\scr C$ is an $\infty$-topos whose hypercompletion is $\scr S$ and whose full subcategory of $\infty$-connective objects is $\scr C$. 

\begin{acknowledgment}
	I thank anonymous MathOverflow user ``user84563'' for asking a question about loci that led me to write this note.
\end{acknowledgment}

\providecommand{\bysame}{\leavevmode\hbox to3em{\hrulefill}\thinspace}

\end{document}